\title{Fine Mixed Subdivisions of a Dilated Triangle}
\author{Yuan Yao}
\author{Fedir Yudin}
\email{\{yyao1,fedir\}@mit.edu}
\thanks{Both authors were supported by the MIT Math Department's Supervised UROP (UROP+).}
\begin{document}

\begin{abstract}
    An equilateral triangle of side $n$ can be tiled with $n$ unit equilateral triangles and $\frac{n(n-1)}{2}$ unit rhombi with $60^{\circ}$ and $120^{\circ}$ angles. In this paper, we focus on understanding such tilings with a fixed arrangement of unit triangles. We formulate a criterion for such a tiling being unique, identify a minimal set of operations that connects all such tilings, and determine which parts are common to all such tilings. Additionally, we establish a nearly tight bound on the number of ``trapezoid flips'' needed to connect any two tilings with different arrangements of triangles.
\end{abstract}

\maketitle

\section{Introduction}

Consider an equilateral triangle of side $n$. Suppose that we cut out $n$ upward unit equilateral triangles, and attempt to tile the rest with unit $60^{\circ}$--$120^{\circ}$ rhombi formed by gluing two unit equilateral triangles together. This results in a \emph{fine mixed subdivision} of the triangle, sometimes also called a \emph{rhombus tiling of a holey triangle} (\cite{ArdBil06}).

\begin{figure}[h]
\centering
\includegraphics{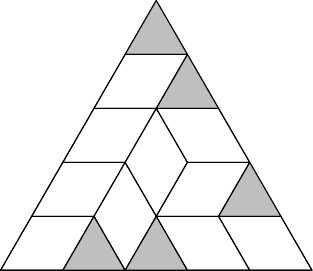}
\caption{A fine mixed subdivision of an equilateral triangle of side length 5.}\label{fig:ex}
\end{figure}

In \cite{ArdBil06}, Ardila and Billey described the condition of the positions of the holes for which the tiling is possible via a ``spread-out condition'' (see \cref{thm:spread}). In this paper, we focus on extending our understanding of the tilings with any given set of hole positions beyond their existence. Sections 3 to 6 explore the structure of the set of tilings for a fixed arrangement of triangular holes, and Section 7 discusses an operation that moves the holes and connects all possible tilings.

\section{Preliminaries}

For convenience, we will use ``upward'' and ``downward'' to describe the orientation of equilateral triangles. (The equilateral triangles in \cref{fig:ex} are all upwards.)

\begin{definition}[\cite{San03}, Definition 3.1]
    A \emph{fine mixed subdivision} of an upward equilateral triangle of side length $n$ is a partition into $n$ upward unit equilateral triangles (also referred to as \emph{holes} later) and $\binom{n}{2}$ unit-side-length rhombi formed by combining two unit equilateral triangles edge-to-edge. 
    
    (From now on, we will simply use ``rhombus'' to refer to such a rhombus. We will also drop the ``equilateral'' adjective for triangles as all triangles in this paper will be equilateral.)
\end{definition}

In the rest of the paper, we will sometimes also call such a fine mixed subdivision a \emph{rhombus tiling} of a \emph{holey triangle}, where the ``holey triangle'' is the region formed by removing the $n$ unit triangles from the large triangle. Such a rhombus tiling is naturally defined for any region comprised of several unit triangles on a triangular grid.

Suppose we're given a holey triangle and want to determine whether it has a rhombus tiling. Note that every upward triangle $T$ of side $k$ consists of $k$ more upward unit triangles than downward unit triangles. Also, note that a rhombus covers an equal number of upward and downward triangles in $T$ if it does not intersect the boundary of $T$, and covers one upward triangle if it intersects the boundary of $T$. Hence, there are at most $k$ triangular holes that lie in $T$. Ardila and Billey showed that if this condition is satisfied for every $T$, then the desired tiling exists.

\begin{definition}
    We will say that an arrangement of $n$ upward unit triangular holes in an upward triangle of side length $n$ is \emph{spread-out} if every upward grid-aligned triangle of size $k$ contains at most $k$ holes. We will also refer to the side length of such a triangle as its \emph{size}.
\end{definition}

\begin{theorem}[\cite{ArdBil06}, Theorem 6.2]\label{thm:spread}
    An arrangement of unit triangles is part of some fine mixed subdivision if and only if it's spread-out.
\end{theorem}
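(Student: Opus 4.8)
The plan is to treat the two implications of the statement separately. The forward implication is the counting argument already indicated before the statement: in a fine mixed subdivision, an upward grid triangle $T$ of size $k$ contains $k$ more upward than downward unit triangles; a rhombus contained in $T$ contributes $0$ to this difference, a rhombus crossing $\partial T$ contributes one upward triangle of $T$, and each hole inside $T$ contributes one upward triangle; hence $T$ contains at most $k$ holes. So the content is the converse, which I would prove by induction on $n$, peeling off the bottom row.

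Label the unit triangles of the bottom row $R$ of $\Delta_n$, from left to right, $u_1, d_1, u_2, \dots, d_{n-1}, u_n$ (with $u_i$ upward, $d_j$ downward), and note that the upward triangle of row two directly above $d_j$ is its $j$-th one. Applying spread-out-ness to the size-$(n-1)$ triangle formed by rows $2,\dots,n$ shows $R$ contains at least one hole; write the holes of $R$ as $u_{p_1},\dots,u_{p_m}$ with $p_1<\dots<p_m$ and $m\ge 1$. A rhombus meeting $R$ is either horizontal ($u_i\cup d_j\subseteq R$) or vertical ($d_j$ together with the triangle above it), so a tiling of $R$ is exactly a choice of a set $Q$ of indices of vertical rhombi for which deleting the $u_{p_i}$ and the $d_j$ ($j\in Q$) from the path $u_1 d_1\cdots d_{n-1}u_n$ leaves only even components. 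A parity count shows that deleting the holes alone leaves two even end-pieces and $m-1$ odd middle-pieces, the $j$-th running from $d_{p_j}$ to $d_{p_{j+1}-1}$; to make every component even while keeping the hole count of the reduced picture correct we are forced to delete exactly one further $d$ from each middle-piece, i.e. $Q=\{q_1,\dots,q_{m-1}\}$ with $p_j\le q_j\le p_{j+1}-1$. Erasing the tiled row $R$ leaves rows $2,\dots,n$ as a copy of $\Delta_{n-1}$ whose holes are the old holes of rows $2,\dots,n$ together with $m-1$ new ones at bottom-row positions $q_1,\dots,q_{m-1}$; the count $(n-m)+(m-1)=n-1$ is right, so it remains to pick $Q$ (also disjoint from the old holes of row two, which spread-out-ness makes possible) so that this $\Delta_{n-1}$-arrangement is spread-out, after which the induction closes.

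To check that the new arrangement is spread-out it suffices to consider an upward triangle $V$ of $\Delta_{n-1}$ whose base lies on the bottom edge (any other upward triangle of $\Delta_{n-1}$ contains no new hole and is itself an upward triangle of $\Delta_n$, hence inherits the old bound). For such a $V$ of size $k$, let $\widehat V$ be the size-$(k+1)$ upward triangle of $\Delta_n$ obtained by extending $V$ one row downward, let $B$ be the block of $k+1$ bottom-row positions of $\widehat V$, and $B^-=B\setminus\{\max B\}$ the $k$ bottom-row positions of $V$. Counting the holes of the new arrangement in $V$ through $\widehat V$ gives
\[\#(\text{new holes in }V)=\#(\text{old holes in }\widehat V)-|P\cap B|+|Q\cap B^-|\le (k+1)-|P\cap B|+|Q\cap B^-|,\]
so it is enough to choose the $q_j$ (with $q_j\in[p_j,p_{j+1}-1]$) so that $|Q\cap B^-|\le|P\cap B|-1$ whenever $\widehat V$ attains the maximal $k+1$ holes and $|Q\cap B^-|\le|P\cap B|$ otherwise, simultaneously over all these blocks $B$.

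This last step is where I expect the real difficulty: the obvious choice $q_j=p_j$ can genuinely fail, so $Q$ must be chosen with care. I would argue by extremality — among all admissible $Q$ take one violating the fewest of the above inequalities, pick a violated block $B$, and combine the old spread-out inequalities at the triangles $\widehat V$ producing $B$ with the interleaving constraints $q_j\in[p_j,p_{j+1}-1]$ to slide a single $q_j$ and strictly decrease the number of violations, a contradiction. As a cross-check one can instead recast the converse as a perfect-matching problem on the bipartite adjacency graph of the upward and downward unit triangles of the holey triangle (they are equinumerous) and prove it via Hall's condition, the work being to show that the tightest vertex sets are exactly the upward triangular subregions; the essential combinatorics is the same.
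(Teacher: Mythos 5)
This statement is quoted from Ardila and Billey (their Theorem 6.2); the paper you are commenting on does not prove it, so your proposal can only be judged on its own merits. The forward direction and the combinatorial description of how a tiling can meet the bottom row (exactly one vertical rhombus in each of the $m-1$ odd middle pieces between consecutive bottom-row holes) are fine. But the proof has a genuine gap exactly where you flag it: the entire content of the hard direction is the claim that one can choose $Q=\{q_1,\dots,q_{m-1}\}$ with $q_j\in[p_j,p_{j+1}-1]$, avoiding the holes of row two, so that all the inequalities $|Q\cap B^-|\le |P\cap B|-1$ (for saturated $\widehat V$) and $|Q\cap B^-|\le |P\cap B|$ (otherwise) hold simultaneously. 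You do not prove this; you only assert that an extremal ``slide one $q_j$ and reduce the number of violations'' argument should work, without exhibiting which $q_j$ to slide, in which direction, or why the old spread-out inequalities prevent the slide from creating a new violation elsewhere (a slide that fixes one block $B$ can enter another block $B'$, and the constraints $q_j\ge p_j$, $q_j\le p_{j+1}-1$ and the row-two holes can all obstruct the move). Since you yourself note that the greedy choice can fail, the existence of an admissible $Q$ is precisely the theorem's difficulty, and as written nothing rules out that every admissible $Q$ violates some block. The side condition that the $q_j$ can be chosen disjoint from the row-two holes is likewise asserted, not argued.

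The Hall's-condition ``cross-check'' does not close the gap either: reducing Hall's condition for the up/down adjacency graph of the holey triangle to the spread-out inequalities requires showing that the critical vertex sets can be taken to be (unions of) upward triangular regions, which is again the substantive combinatorial content and is left unproven. So the architecture (induction peeling the bottom row, or equivalently a transversal/Hall argument) is reasonable and could in principle be completed, but the central step is missing; to repair it you would need either a concrete selection rule for $Q$ with a full verification, a correctly executed exchange/extremal argument, or simply a citation of Ardila--Billey's proof, which is how the paper itself handles this statement.
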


\begin{definition}
    For an arrangement of triangular holes, we say an upward triangle of side length $k$ is \emph{saturated} if it contains exactly $k$ triangular holes. 
\end{definition}

\begin{lemma}\label{lemma:saturated}
    If a triangle is saturated, then no rhombus can intersect its boundary in any tiling.
\end{lemma}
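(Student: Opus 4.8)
The plan is to run the same up/down counting argument that was used to derive the spread-out bound, but track the equality case. Fix any fine mixed subdivision and let $T$ be a saturated upward triangle of side length $k$, so $T$ contains exactly $k$ holes. Recall that $T$ is built from $\binom{k+1}{2}$ upward unit triangles and $\binom{k}{2}$ downward unit triangles. In the given tiling, every unit triangle of $T$ is covered by exactly one of three kinds of pieces: a hole, a rhombus lying entirely inside $T$, or a rhombus that straddles the boundary of $T$ (one of its two unit triangles inside $T$, the other outside). Let $r$ be the number of rhombi of the second type and $b$ the number of the third.

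The key local observation, already noted in the preliminaries, is the orientation bookkeeping: a rhombus contained in $T$ covers one upward and one downward unit triangle of $T$, while a rhombus straddling the boundary of $T$ covers exactly one upward unit triangle of $T$ and no downward one. For the latter I would spell out that, since $T$ is an \emph{upward} triangle, the unit triangle adjacent to any boundary edge of $T$ from the inside is always upward (the downward unit triangles in the border row of $T$ meet $\partial T$ only at a vertex), so a boundary-straddling rhombus must pair such an interior upward unit triangle with a downward unit triangle lying outside $T$. Counting downward unit triangles of $T$ then gives $r = \binom{k}{2}$, and counting upward unit triangles gives $k + r + b = \binom{k+1}{2}$. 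Subtracting and using $r = \binom{k}{2}$ yields $k + b = k$, hence $b = 0$: no rhombus straddles $\partial T$.

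Finally I would note that this is exactly the claim, once we agree that ``a rhombus intersects the boundary of $T$'' means it has unit triangles on both sides of $\partial T$ (a rhombus lying inside $T$ with an edge on $\partial T$ does not count, as its interior is disjoint from $\partial T$). I do not expect any real obstacle here; the only point requiring care is the orientation claim for straddling rhombi along each of the three sides of $T$, and that is a direct consequence of $T$ being upward, so it reduces to a one-line case check (or a citation to the discussion preceding \cref{thm:spread}).
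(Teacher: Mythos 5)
Your argument is correct and is essentially the paper's own proof: both count upward versus downward unit triangles of $T$ covered by rhombi and observe that a boundary-straddling rhombus contributes one upward and no downward triangle, so saturation (the $k$ holes absorbing exactly the excess of upward triangles) forces the number of straddling rhombi to be zero. Your version merely makes the bookkeeping and the orientation check for straddling rhombi explicit, which the paper leaves implicit.
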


\begin{proof}
    Consider a saturated triangle $T$. A rhombus covers an equal number of upward and downward triangles in $T$ if it does not intersect the boundary of $T$, and covers one upward triangle if it intersects the boundary of $T$. Since the number of upward triangles in $T$ covered by a rhombus is equal to the number of downward triangles in $T$ covered by a rhombus, no rhombus can intersect the boundary of $T$.
\end{proof}

\section{Uniqueness of Tiling}

Suppose we are given an arrangement of unit triangular holes and would like to tile the rest with rhombi. If $T$ is a saturated triangle for the given arrangement, by \cref{lemma:saturated} this problem can be separated into two independent sub-problems: tiling $T$ and tiling the region outside $T$. Thus, it makes sense to consider a more general problem, where the triangular holes do not necessarily have unit side length.

More precisely, consider an upward triangle $\Delta$ of side length $n$ divided into $n^2$ unit triangles, and suppose that we cut out several disjoint upward triangular holes whose sizes sum to $n$.

\begin{figure}[h]
\centering
\includegraphics{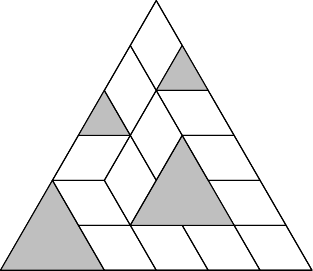}
\caption{A tiling with some holes whose side lengths are greater than 1.}
\end{figure}

We say a set of triangles $S$ with disjoint interiors is \emph{spread-out} if for any upward triangle $T$ inside $\Delta$,

\begin{equation*}
\sum_{s\in S} \text{size}(s\cap T) \le \text{size}(T).    
\end{equation*}

Call $T$ \emph{saturated} (by $S$) if equality holds. Note that all elements of $S$ are saturated triangles themselves.

Similarly to the case of unit triangular holes, one may prove that if $\Delta \setminus \bigcup_{s\in S} s$ is tile-able with rhombi, then $S$ is spread-out. If we replace all triangles in $S$ with upward unit triangles in their bottom rows, we get a spread-out arrangement of unit triangles, so $S$ being spread-out is sufficient for $\Delta \setminus \bigcup_{s\in S} s$ to be tile-able with rhombi, as a corollary of \cref{thm:spread}.

For upward triangles $s_1$, $s_2$, denote by $s_1 \vee s_2$ the smallest upward triangle that contains both $s_1$ and $s_2$. Note that if saturated triangles $s_1, s_2 \in S$ intersect at exactly one point, then $s_1 \vee s_2$ is saturated, and there is only one way to tile $(s_1 \vee s_2) \setminus (s_1 \cup s_2)$. Also, no other triangles in $S$ can intersect $s_1\vee s_2$ in their interiors. Therefore, there is an obvious bijection between tilings of $\Delta$ with holes $S$ and tilings of $\Delta$ with holes $S \setminus \{s_1, s_2\} \cup (s_1 \vee s_2)$.

\begin{figure}[h]
\centering
\includegraphics{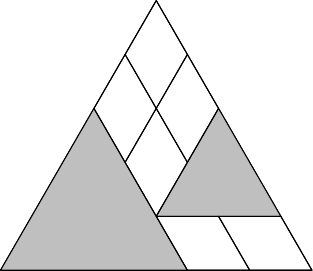}
\caption{Two touching saturated triangles $s_1$ and $s_2$ and the unique way to tile $(s_1 \vee s_2)\setminus (s_1\cup s_2)$.}
\end{figure}

Consider any set $S$ of triangular holes and a saturated triangle $T$. Let $A$ be the set of triangles in $S$ completely outside $T$, $B$ be the set of triangles in $S$ completely inside $T$, and $\{s_1, s_2, \dots, s_k\}$ be the rest of triangles in $S$.

For each $i$, let $p_i = T \cap s_i$, and pick $q_i$ such that $s_i = p_i \vee q_i$. Note that a tiling of $\Delta$ with holes $S$ corresponds to a tiling of $\Delta$ with holes holes $A \cup \{T\} \cup \{q_1, q_2, \dots q_k\}$ and a tiling of $T$ with holes $B \cup \{p_1, p_2, \dots p_k\}$.

Suppose that we start with spread-out arrangement of holes and begin performing changes of the following type: two triangles $s_1$, $s_2$ touching at a point can be replaced with $s_1\vee s_2$. Eventually, we will get an arrangement of triangular holes that are pairwise not touching. If this arrangement consists of a single hole coinciding with the big triangle, then the original arrangement of holes has a unique tiling. We claim that the converse is also true.

\begin{theorem}\label{thm:nonunique}
    Let $S$ be a spread-out set of triangles.  If no two triangles in $S$ intersect, then  $\Delta \setminus \cup_{s\in S} s$ is tileable in at least two ways.
\end{theorem}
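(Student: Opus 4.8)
The plan is to prove the statement with the (implicit) convention $S\neq\{\Delta\}$, equivalently $|S|\geq 2$ since the sizes of the triangles in $S$ sum to $n$. First I would reduce to the case of \emph{unit} holes: replacing each $s\in S$ by the $\mathrm{size}(s)$ unit upward triangles in its bottom row gives a spread-out set $S'$ that is still non-touching (a subset of a triangle that is disjoint from a second triangle is disjoint from it), and since the region $s$ minus those unit triangles is a "staircase" with a unique rhombus tiling, the tilings of $\Delta\setminus\bigcup S$ correspond bijectively to those of $\Delta\setminus\bigcup S'$. So assume all holes are unit triangles; then $n\geq 3$, since a triangle of side $\leq 2$ contains no two non-touching unit upward subtriangles. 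Next I would argue by induction on $n$: if some proper subtriangle $T$ of side $k\geq 2$ is saturated, then $T$ contains exactly $k\geq 2$ non-touching unit holes, and by the saturated-triangle decomposition recorded above the tilings of $\Delta\setminus\bigcup S$ split as a product of the tilings of $T$ with its $k$ holes and the tilings of $\Delta$ with $T$ as a single big hole; the first factor is at least $2$ by the inductive hypothesis and the second is at least $1$ by (the corollary of) \cref{thm:spread}, so we are done. Hence we may assume \textbf{(M)}: no proper subtriangle of $\Delta$ of side $\geq 2$ is saturated.

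Under (M) the crux is the claim that all three corner unit triangles of $\Delta$ are holes. The shape of the argument: by (M) none of the three side-$(n-1)$ subtriangles is saturated, which forces each of the three edge strips of $\Delta$ (the bottom row, and the strips of unit upward triangles along the left and right edges) to contain at least $2$ holes — exactly $0$ is impossible by spread-out, and exactly $1$ would make a side-$(n-1)$ subtriangle saturated. If, say, the top corner were not a hole, then, since it lies in both the left and the right edge strip, each of those strips has $\geq 2$ holes strictly below row $1$; combining this with the bounds from the smaller top subtriangles not being saturated (the top $k$ rows contain at most $k-1$ holes, so row $1$ is empty, row $2$ has at most one hole, and so on) together with the non-touching condition should yield a contradiction. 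Making this last step work in general — showing that a non-hole corner in case (M) always produces a saturated proper subtriangle — is the main obstacle; the cases $n=3,4$ are short and confirm it, and I expect the general argument to proceed by a careful row-by-row count of the holes (equivalently, exploiting that the family of saturated subtriangles is closed under the operation $\vee$).

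Finally, granting the claim, I would finish via the matching graph. Let $G$ be the bipartite graph whose vertices are the non-hole unit upward triangles together with all unit downward triangles, with an edge for each pair sharing a side; the tilings of $\Delta\setminus\bigcup S$ are exactly the perfect matchings of $G$, and one exists by \cref{thm:spread}. Every downward triangle has all three of its neighboring upward triangles present, and at most one of them is a hole (two hole-neighbors of a common downward triangle would share a vertex, contradicting non-touching), so it has degree $\geq 2$; every non-corner upward triangle has $\geq 2$ downward neighbors; and the corner upward triangles have been removed as holes. Thus $G$ has minimum degree $\geq 2$. A bipartite graph with a perfect matching $M$ and minimum degree $\geq 2$ has a second one: orient each edge of $M$ from its downward endpoint to its upward endpoint and every other edge the opposite way, so that every vertex has out-degree $\geq 1$; a directed cycle then exists, it is automatically $M$-alternating, and switching $M$ along it produces a perfect matching different from $M$. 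Hence $\Delta\setminus\bigcup S$ has at least two rhombus tilings.
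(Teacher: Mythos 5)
Your reduction to unit holes is broken at the exact point you need it. If $s\in S$ has size $k\ge 2$, the $k$ upward unit triangles in its bottom row are \emph{not} pairwise non-touching: consecutive ones share a vertex (both endpoints of the downward triangle between them). Your parenthetical justification only rules out touching between unit triangles coming from \emph{different} holes of $S$. The bijection of tilings between $S$ and $S'$ is fine (each $s$ is saturated for $S'$, so \cref{lemma:saturated} applies and the inside of $s$ is forced), but the hypothesis ``no two holes touch'' is not inherited by $S'$, and your later degree count then fails: a downward triangle in the bottom row of a size-$2$ hole has two hole-neighbors in $S'$ and degree $1$ in your bipartite graph. (A hole of size $\ge 2$ other than $\Delta$ is itself a saturated proper subtriangle, so one could try to absorb this into your induction instead, but your induction step as written also assumes the holes inside the saturated triangle $T$ are unit.)

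The more serious problem is the step you yourself flag as ``the main obstacle'': the claim that under (M) all three corner cells are holes is never proved, and your entire minimum-degree argument hinges on it --- a non-hole corner has degree $1$, and if its unique edge is matched that vertex has out-degree $0$ in your orientation, so the existence of a directed (alternating) cycle is no longer guaranteed. Checking $n=3,4$ does not close this. It is worth seeing how the paper's own proof sidesteps any such global structural claim: it orients adjacencies between regions (essentially your orientation, phrased on rhombi) and uses only the local fact that the downward triangle of any rhombus $a$ has two other upward neighbors, of which at most one can belong to a hole because two holes meeting the same downward triangle would share a vertex; hence every rhombus has an in-neighbor that is a rhombus, an infinite backward walk among the finitely many rhombi closes a cycle, and flipping that cycle gives a second tiling. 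This argument needs no corner claim, no induction, and works directly for holes of arbitrary size. Since your final cycle-switching step is exactly this mechanism, the natural fix is to run the backward-walk argument (which never visits corners unless they carry a rhombus with an in-neighbor) rather than to try to establish the corner claim and global minimum degree $\ge 2$.
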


\begin{corollary}
    A spread-out set $S$ of $n$ unit triangles induces a unique tiling of a triangle of side length $n$ if and only if one can repeatedly replace two elements $s_1, s_2$ of $S$ such that $s_1\cap s_2$ is a single point with $s_1\vee s_2$ until there is only one element, which is the entire triangle.
\end{corollary}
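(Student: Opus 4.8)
The plan is to prove both implications using the fact, established in the paragraph preceding \cref{thm:nonunique}, that replacing a point-touching pair $s_1,s_2$ by $s_1\vee s_2$ induces a bijection between the tilings of $\Delta$ with holes $S$ and the tilings of $\Delta$ with holes $S\setminus\{s_1,s_2\}\cup\{s_1\vee s_2\}$; in particular each such replacement preserves the number of tilings. First I would record that the operation also preserves the three invariants that make it repeatable: the new hole set still has pairwise disjoint interiors (since no other hole meets $s_1\vee s_2$ in its interior), the sizes of the holes still sum to $n$ (because $s_1\vee s_2$ is saturated and contains no holes other than $s_1,s_2$, forcing $\operatorname{size}(s_1\vee s_2)=\operatorname{size}(s_1)+\operatorname{size}(s_2)$), and the set stays spread-out (the bijection sends a tiling to a tiling, so the new region is tileable, hence spread-out by the observation in Section 3 that tileable arrangements are spread-out).

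For the ``if'' direction, suppose some sequence of replacements reduces $S$ to a single hole equal to $\Delta$. Composing the bijections identifies the tilings of the original region with the tilings of $\Delta\setminus\Delta=\emptyset$, of which there is exactly one (the empty tiling). Hence the original arrangement has a unique tiling.

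For the ``only if'' direction I would argue by taking \emph{any} maximal sequence of replacements: since each step strictly decreases the number of holes, the process terminates at an arrangement $S'$ in which no two holes touch at a point. Here I would use the elementary geometric fact that two distinct upward triangles with disjoint interiors can meet in at most a single point; consequently ``no point-touching pair'' means the holes of $S'$ are pairwise non-intersecting, which is exactly the hypothesis of \cref{thm:nonunique}. If $|S'|\ge 2$, that theorem gives at least two tilings of $\Delta\setminus\bigcup_{s\in S'}s$; but the number of tilings was preserved at every step, so the original arrangement would also have at least two tilings, contradicting uniqueness. Therefore $|S'|=1$, and since the sizes of the holes sum to $n$ throughout, this single hole is an upward triangle of size $n$ inside $\Delta$, i.e.\ $\Delta$ itself, so the maximal sequence is the desired reduction.

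The main obstacle is not any single computation but making sure the replacement operation genuinely lands on the hypotheses of the two cited results. Concretely, I expect the care to go into (i) the invariance of the spread-out property, for which routing through tileability and \cref{thm:spread} is cleaner than a direct inequality argument, and (ii) the geometric claim that exhausting point-touching replacements yields a pairwise-disjoint configuration, which is what lets \cref{thm:nonunique} split the outcome into ``exactly one tiling'' versus ``at least two.'' Once these are in place, the count-preserving bijection does the rest.
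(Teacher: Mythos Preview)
Your proposal is correct and follows essentially the same approach as the paper: the paper does not give a separate proof environment for the corollary, but the argument is exactly the one sketched in the paragraph preceding \cref{thm:nonunique} together with \cref{thm:nonunique} itself, namely that the replacement operation is a tiling bijection and that a terminal configuration with more than one hole has at least two tilings. Your write-up is simply a more careful formalization of that sketch, with explicit checks (disjoint interiors, size sum, spread-out preservation, and the geometric fact that non-touching upward triangles are disjoint) that the paper leaves implicit.
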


\begin{remark}
    The number of uniquely tile-able spread-out arrangements for $n = 1, 2, \dots$ is \[1, 3, 16, 122, 1188, \dots.\] This is sequence A295928 on OEIS, which counts the number of triangular matrices $\{T_{i, j}\mid 1\leq i, j\leq n, i+j\leq n+1\}$ with exactly $n$ $1$-entries (and the rest being $0$'s) such that one can repeatedly replace triples of the form $(T_{i, j}, T_{i+1, j}, T_{i, j+1})$ with exactly two $1$'s with all $1$'s until the entire matrix only contain $1$'s. It is not difficult to see that this description is the same as our condition.
\end{remark}

We will delay the proof of \cref{thm:nonunique} until the end of the next section.

\begin{definition}[\cite{San03}, Section 1.1]
    A polyhedral subdivision is \emph{regular} if it can be realized as the projection of lower faces of a polytope.
\end{definition}

In the case of fine mixed subdivisions of dilated triangles, there is an equivalent condition of regularity that is more useful for our purpose.

\begin{lemma}[\cite{DevStr04}, Theorem 1, paraphrased]\label{lemma:real}
    A fine mixed subdivision of a triangle of side length $n$ is regular if and only if its corresponding tropical pseudoline arrangement can be realized as a tropical line arrangement.
\end{lemma}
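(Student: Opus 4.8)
The plan is to reduce the statement to the Develin--Sturmfels duality between regular subdivisions of a product of two simplices and realizable tropical hyperplane arrangements, after passing through the Cayley trick. Regard the side-$n$ triangle $\Delta$ as the Minkowski sum $\Delta_2^{(1)} + \cdots + \Delta_2^{(n)}$ of $n$ unit triangles. In a fine mixed subdivision, the $n$ triangular holes are precisely the cells in which a single summand $\Delta_2^{(i)}$ contributes its entire triangle while every other summand contributes a single vertex, and each rhombus is a cell in which two summands each contribute an edge; this labels every hole by an index $i \in \{1, \dots, n\}$. The Cayley trick turns the fine mixed subdivision into a triangulation of the product $\Delta_2 \times \Delta_{n-1}$, with the second factor indexed by the summands, and this bijection is known to carry regular subdivisions to regular subdivisions. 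Hence \emph{the fine mixed subdivision is regular} is equivalent to \emph{the corresponding triangulation of $\Delta_2 \times \Delta_{n-1}$ is regular}.

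Next I would make the dual dictionary precise. To the $i$-th summand one associates a tripod in $\mathbb{R}^2$---the combinatorial tropical pseudoline $L_i$---whose apex sits at the $i$-th hole and whose three rays separate the cells of the subdivision according to which vertex of $\Delta_2^{(i)}$ is active. The central computation in \cite{DevStr04} is that an honest arrangement of $n$ tropical lines with apex coordinates $a_{ij}$ (for $i \in \{1,\dots,n\}$ and $j \in \{1,2,3\}$) induces, through its type decomposition, exactly the regular subdivision of $\Delta_2 \times \Delta_{n-1}$ whose lifting heights are the $a_{ij}$. I would record this as the bridge between the two worlds: assigning to a height matrix $(a_{ij})$ both the tropical line arrangement with those apices and the regular subdivision it lifts produces the very same combinatorial object on the two sides.

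With the dictionary in place both implications are immediate. For \emph{realizable $\Rightarrow$ regular}, a realization of the pseudoline arrangement by genuine tropical lines supplies apex coordinates $a_{ij}$; feeding these into the bridge exhibits the given triangulation of $\Delta_2 \times \Delta_{n-1}$ as the regular subdivision lifted by $(a_{ij})$, so the mixed subdivision is regular. For \emph{regular $\Rightarrow$ realizable}, a regular mixed subdivision yields, through the Cayley trick, a height matrix $(a_{ij})$ lifting the triangulation; the tropical lines with apices $a_{ij}$ then have precisely the prescribed combinatorial type, which is the desired realization.

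The main obstacle I expect is pinning down the duality dictionary precisely enough that the combinatorial type of the tropical line arrangement---including its unbounded behavior, namely the three rays of each tripod escaping to infinity in the coordinate directions, together with the pattern of pairwise crossings---is matched cell-for-cell with the dual of the fine mixed subdivision, while keeping the min-plus (or max-plus) sign conventions consistent so that lower faces of the lift correspond to the correct orientation of the tripods. Once this bookkeeping is settled, the equivalence is exactly the content of \cite{DevStr04}, Theorem 1, and no estimates are required.
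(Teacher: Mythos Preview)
The paper does not prove this lemma at all: it is stated as a citation of \cite{DevStr04}, Theorem~1, and the authors explicitly write ``Here we omit the definition of tropical (pseudo)lines, and instead present an example.'' So there is no proof in the paper to compare against.

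Your sketch is the standard derivation of this fact: pass through the Cayley trick to identify (regular) fine mixed subdivisions of $n\Delta_2$ with (regular) triangulations of $\Delta_2\times\Delta_{n-1}$, and then invoke the Develin--Sturmfels duality between regular subdivisions of a product of simplices and tropical hyperplane arrangements. This is exactly the content of the cited theorem, and your identification of the bookkeeping (sign conventions, matching the type decomposition to the dual cell structure) as the only real work is accurate. Nothing is missing from your outline; it simply reproduces the argument that the paper chose to cite rather than include.
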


Here we omit the definition of tropical (pseudo)lines, and instead present an example in \cref{fig:reg} for an intuitive understanding.

\begin{figure}[h]
    \centering
    \includegraphics{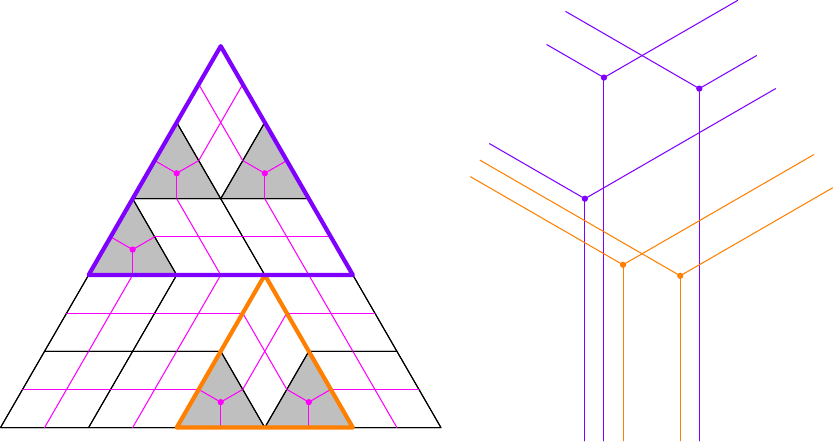}
    \caption{A fine mixed subdivision with its tropical pseudoline arrangement drawn in magenta, along with its realization (in purple and orange). The purple and orange triangles correspond to $s_1$ and $s_2$ in the proof, respectively.} \label{fig:reg}
\end{figure}

In \cite{ArdBil06}, the authors additionally conjectured that any spread-out arrangement of unit triangles can be completed into a \emph{regular} fine mixed subdivision. This has been disproved by Yoo with a counterexample in $n=6$, where there are two possible fine mixed subdivisions but neither is regular (see Section 8.1 of \cite{ArdCeb13}). We argue that this is in a sense the ``minimal'' counterexample.

\begin{proposition}
    If a spread-out arrangement of unit triangles induces a unique fine mixed subdivision, then this subdivision is regular.
\end{proposition}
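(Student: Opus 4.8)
The plan is to induct on $n$, using the combinatorial description of uniquely tileable arrangements to cut $n\Delta$ into two smaller triangles, and then to assemble a tropical line realization from realizations of the two pieces. By \cref{lemma:real} it suffices to realize the tropical pseudoline arrangement of the unique subdivision by honest tropical lines; concretely, to produce a height matrix $H\in\mathbb{R}^{3\times n}$ (the $j$-th column being the apex, in $\mathbb{TP}^2$, of the $j$-th tropical line) whose induced regular fine mixed subdivision of $n\Delta$ is the given one. The case $n=1$ is trivial.

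For the inductive step, let $S$ be the spread-out set of $n$ unit holes whose tiling is unique. By the corollary following \cref{thm:nonunique}, $S$ can be reduced to $n\Delta$ by repeatedly replacing a pair of triangles meeting at a single (lattice) point with their join; consider the last such replacement. It merges triangles $T_1,T_2$ with $T_1\cap T_2$ a single point, $T_1\vee T_2=n\Delta$, and $\text{size}(T_1)+\text{size}(T_2)=n$; put $a_i=\text{size}(T_i)\ge 1$. The region $n\Delta\setminus(T_1\cup T_2)$ then has a unique rhombus tiling (a short case analysis on the placement of $T_1,T_2$ shows it is a disjoint union of parallelograms) --- this is the observation made just before \cref{thm:nonunique}, applied to $T_1,T_2$. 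Let $S_i$ consist of the holes of $S$ lying inside $T_i$; since every hole eventually gets merged into $T_1$ or $T_2$, we have $S=S_1\sqcup S_2$ and $|S_i|=a_i$. Each $S_i$ induces a unique tiling of $T_i$: a second tiling of $T_i$ would, by \cref{lemma:saturated} (no rhombus crosses $\partial T_i$, as $T_i$ is saturated), glue to a second tiling of $n\Delta$. As $a_i<n$, the inductive hypothesis makes the subdivision of $T_i$ regular; fix realizations $H_i\in\mathbb{R}^{3\times a_i}$.

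I would then form $H=[\,\varepsilon H_1+v_1\mathbf{1}^{\top}\ \mid\ \varepsilon H_2+v_2\mathbf{1}^{\top}\,]$, where $v_i\in\mathbb{TP}^2$ is a tropical point encoding the lattice position of $T_i$ inside $n\Delta$ (allowing entries at $\pm\infty$ for boundary positions) and $\varepsilon>0$ is small. Rescaling the heights of a block does not change the subdivision it induces, and for $\varepsilon$ small the two resulting clusters of columns are tight enough that the subdivision induced by $H$ restricts, on a saturated sub-triangle, to the translation-invariant subdivision of $H_i$ --- which one checks is placed exactly at $T_i$. What remains, and what I expect to be the main obstacle, is to show that the ``cross'' combinatorics between a line of the first block and a line of the second stabilizes as $\varepsilon\to 0$ to the combinatorics of ``two points sitting at $v_1$ and $v_2$'', which reproduces precisely the (unique) rhombus tiling of the parallelogram region $n\Delta\setminus(T_1\cup T_2)$. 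Together with the previous sentence this identifies the subdivision of $H$ with the target one, completing the induction.

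The hard part is this last verification, and it is exactly where uniqueness of the tiling is used: the region between $T_1$ and $T_2$ being a union of parallelograms means its rhombi are all parallel, which dually forces every line of the first block to pass on one and the same side of the whole second block and symmetrically --- the degenerate cross-configuration produced by separating the clusters to their distinct positions $v_1,v_2$; since that region has no alternative tiling, there is nothing left to obstruct the match. Making ``passes on the same side'' precise in terms of covectors (equivalently, the signs of the tropical $2\times 2$ minors of $H$), choosing $v_1,v_2$ and the scale $\varepsilon$ correctly in each placement case, and noting that the spread-out and merge conditions descend to $S_1,S_2$, are the remaining routine details.
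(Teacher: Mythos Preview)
Your proposal is correct and follows essentially the same inductive strategy as the paper's proof sketch: reduce via the uniqueness criterion to two touching saturated triangles $T_1,T_2$, realize each by tropical lines using the inductive hypothesis, and then combine the realizations by shrinking and positioning. The paper phrases the combination step asymmetrically---keep the realization of $T_1$ fixed and shrink $T_2$'s realization, tucking it between the two downward rays of $T_1$ that flank the touching point---whereas you scale both blocks by the same $\varepsilon$ and separate them by distinct translations $v_1,v_2$; these are equivalent maneuvers, and both versions leave the cross-interaction verification (that the rhombi in $n\Delta\setminus(T_1\cup T_2)$ come out right) as the main omitted detail.
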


\begin{proof}[Proof sketch]
    Based on the uniqueness condition, it suffices to show that if two triangles $s_1$ and $s_2$ touch and each has a regular subdivision, then $s_1\vee s_2$ also has a regular subdivision. 

    From \cref{lemma:real} we can assume that $s_1$ and $s_2$ each correspond to some tropical line arrangement $T_1$ and $T_2$. Assume without loss of generality that $s_2$'s top vertex is touching $s_1$'s bottom edge, as shown in \cref{fig:reg}. In this case, we can shrink $T_2$ and place all of its vertices between the two downward rays of $T_1$ that correspond to the two sides of the point $s_1\cap s_2$, and sufficiently far below so that the leftward and rightward arms of $T_2$ are all below $T_1$'s. It is not difficult to see that this gives a realization of the unique tiling of $s_1\vee s_2$.
\end{proof}

\section{A Graph Representation of Fine Mixed Subdivision}

Consider some fine mixed subdivison. Note that it divides the plane into regions: triangular holes, rhombi, and the exterior region. Consider the graph $G$ whose vertices are these regions, and where there is an edge $a \rightarrow b$ if and only if $a$ contains an upward triangle adjacent to a downward triangle contained by $b$, or $a$ is a region adjacent to the boundary and $b$ is the exterior region. We say that $G$ is the graph corresponding to the original fine mixed subdivision.

\begin{figure}[h]
\centering
\includegraphics{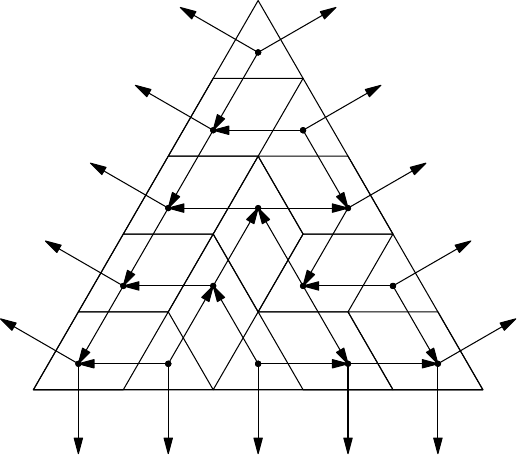}
\caption{A fine mixed subdivision and the corresponding graph.}
\end{figure}

Note that that all vertices corresponding to rhombi have indegree and outdegree equal to $2$; each vertex corresponding to a triangular hole of side $k$ has outdegree $3k$ and indegree $0$; and the vertex corresponding to the outside region has outdegree $0$ and indegree $3n$.

Consider any cycle of the graph. This cycle must consist of rhombi that occupy a sequence of triangles where each next one touches the previous and the last one touches the first one. Then this sequence of triangles can be tiled in another way. We will call replacing one of these tilings with the other one a ``cycle flip''.

\begin{figure}[h]
\centering
\includegraphics{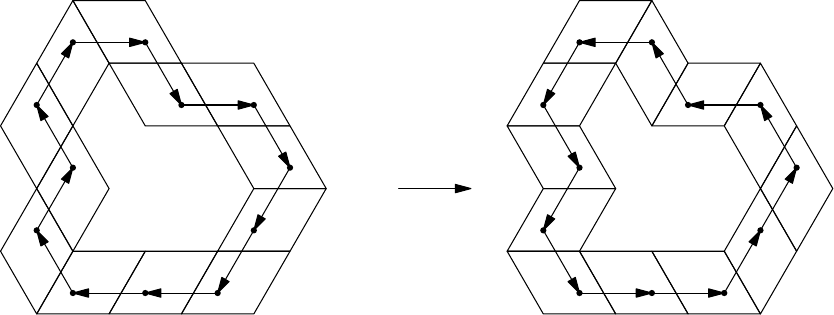}
\caption{A cycle flip.}
\end{figure}

\begin{lemma}\label{lemma:cycle}
    Consider any fine mixed subdivision $\mathcal{F}$, and let $a$ and $b$ be two rhombi such that $a \rightarrow b$ is an edge of its corresponding graph. Let $c$ be the rhombus formed by the upward triangle of $a$ and the downward triangle of $b$. Then there exists a fine mixed subdivision with the same arrangement of triangular holes as $\mathcal{F}$ and using $c$ if and only if $a$ is reachable from $b$ in the graph corresponding to $\mathcal{F}$.
\end{lemma}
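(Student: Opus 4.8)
The plan is to prove the two implications separately, after dealing with one degenerate case. If the region $a$ is a triangular hole, then both sides of the biconditional fail: a hole has indegree $0$ in the graph of $\mathcal{F}$, so it is reachable from no other vertex, and any fine mixed subdivision using $c$ would place the upward unit triangle of $a$ inside a rhombus, so it could not have the same triangular holes. Hence I would assume from now on that $a$ and $b$ are both rhombi; write $a = u_a \cup d_a$ and $b = u_b \cup d_b$ with the $u$'s upward and the $d$'s downward, and record that the hypothesis $a \to b$ means $u_a$ and $d_b$ share an edge — one of the two edges of $u_a$ not bordering $d_a$, so in particular $d_a \neq d_b$ — and that $c = u_a \cup d_b$.

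For the ``if'' direction: assuming $a$ is reachable from $b$, I would pick a simple directed path from $b$ to $a$ and close it with the edge $a \to b$ to obtain a simple directed cycle of rhombi $R_1 = a, R_2 = b, R_3, \dots, R_m$. By the discussion of cycle flips preceding the lemma, this cycle occupies a closed strip of edge-adjacent unit triangles $d_1, u_1, d_2, u_2, \dots, d_m, u_m$ (read cyclically, where $u_i, d_i$ are the triangles of $R_i$), and re-tiling the strip by the rhombi $u_i \cup d_{i+1}$ (indices mod $m$) is a cycle flip, so the result is a fine mixed subdivision with the same triangular holes. Since $u_1 \cup d_2 = u_a \cup d_b = c$, this subdivision uses $c$.

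For the ``only if'' direction: given a fine mixed subdivision $\mathcal{F}'$ with the same holes as $\mathcal{F}$ and using $c$, let $M$ and $M'$ be the perfect matchings of the (common) set of non-hole unit triangles that pair the two triangles of each rhombus of $\mathcal{F}$, resp.\ of $\mathcal{F}'$. Since $u_a$ is matched to $d_a \neq d_b$ in $M$, the pair $\{u_a, d_b\}$ lies in $M' \setminus M$, hence on a simple alternating cycle $C$ of $M \triangle M'$ (which involves only non-hole triangles). Traversing $C$ starting along the $M'$-edge $\{u_a, d_b\}$ and listing the $\mathcal{F}$-rhombi determined by the $M$-edges in the order met, say $\rho_1 = b, \rho_2, \dots, \rho_\ell = a$, one sees that between $\rho_j$ and $\rho_{j+1}$ the intervening $M'$-edge joins the upward triangle of $\rho_j$ to the downward triangle of $\rho_{j+1}$; these two triangles share an edge because they form a rhombus of $\mathcal{F}'$, so $\rho_j \to \rho_{j+1}$ is an edge of the graph of $\mathcal{F}$. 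This exhibits a directed path $b = \rho_1 \to \cdots \to \rho_\ell = a$, witnessing reachability.

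The substantive direction is ``only if'', and the step I expect to need the most care is confirming that the alternating cycle $C$ unwinds into a \emph{coherently} oriented walk in the directed graph of $\mathcal{F}$ — that each $M'$-edge supplies an arrow from the rhombus just left to the next one, not the reverse — and that the $\rho_j$ are pairwise distinct, so that one truly obtains a directed path. Checking that the cycle-flipped tiling of the strip is a valid hole-preserving subdivision, and the short degenerate check for $a$ a hole, are routine bookkeeping on the triangular grid, but they are what make the biconditional come out correctly in all cases.
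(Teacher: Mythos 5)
Your proof is correct and follows essentially the same route as the paper: the ``if'' direction is the identical cycle-flip argument on a simple cycle through the edge $a \to b$, and your ``only if'' argument via the alternating cycle of $M \triangle M'$ is just a matching-theoretic repackaging of the paper's alternating sequence $f_1, g_1, f_2, g_2, \dots$ of rhombi from $\mathcal{F}$ and $\mathcal{F}'$ sharing triangles, with the same orientation bookkeeping showing each step yields an edge of the graph of $\mathcal{F}$.
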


\begin{proof}
    If $a$ is reachable from $b$, then there is a simple cycling containing the edge $a \rightarrow b$. After flipping this cycle, we get the desired fine mixed subdivision.

    Suppose that there is a fine mixed subdivision $\mathcal{G}$ using $c$. Build a sequence of rhombi as follows: let $f_1 = a$. Let $g_1$ be the rhombus in $\mathcal{G}$ containing the downward triangle of $f_1$. Let $f_2$ be the rhombus in $\mathcal{F}$ containing the upward triangle of $g_1$. Let $g_2$ be the rhombus in $\mathcal{G}$ containing the downward triangle of $f_2$, and so on. Since $f_i$ uniquely determines $f_{i-1}$ and $f_{i+1}$, there is an index $k > 1$ such that $f_k = f_1$. Then vertices $a = f_1, f_2, f_3, \dots f_{k-1} = b$ form an cycle in the graph corresponding to $\mathcal{F}$, as desired.
\end{proof}

This gives us another criterion for the uniqueness of a tiling: any tiling is unique for its arrangement of triangles if and only if the corresponding graph has no oriented cycles.

The notion of a cycle flip also gives us a simple way to prove \cref{thm:nonunique}.

\begin{proof}[Proof of \cref{thm:nonunique}]
    Consider a spread-out arrangement where no two trianglular holes touch, and any rhombus $a$ of some tiling of this holey triangle. We claim that there is at least one rhombus $b$ such that the graph contains the edge $a \leftarrow b$. Indeed, each of the two edges going into $a$ is coming from either a triangle or a rhombus; if both come from a triangle then the two triangles are touching. Thus, we can construct an infinite sequence of edges $a_1 \leftarrow a_2 \leftarrow a_3 \leftarrow \dots$, so there must be a cycle in the graph, and we can perform a cycle flip, giving us a second tiling.
\end{proof}

\section{GD Flips}

Using the the idea from the proof of \cref{lemma:cycle}, we see that any two fine mixed subdivisions with the same arrangement of triangular holes differ by several cycle flips. Additionally, if we place $3$ unit triangles in the corners and one triangle of side $n-3$ in the middle we get an arrangement of triangular holes that admits only two fine mixed subdivisions, where only one cycle flip is possible. (See \cref{fig:twotilings}.)

\begin{figure}[h]
\centering
\includegraphics{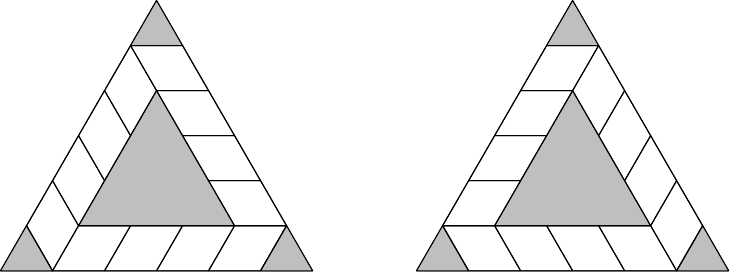}
\caption{An arrangement of triangles admitting exactly two fine mixed subdivisions for $n=6$.}\label{fig:twotilings}
\end{figure}

\begin{definition}
    For $n\ge 3$, consider the region obtained by removing three corner unit triangles from a triangle of side $n$, as well as removing a triangle of side $n-3$ in the middle. There are two configurations of rhombi that tile this region. Let the \emph{Clockwise GD} (CW-GD) of size $n-3$ be the configuration where vertical rhombi (the ones without horizontal edges) go along the left side of the triangle, and let \emph{Counter-clockwise GD} (CCW-GD) be the other one.\footnote{This name is inspired by the (pre-2020) Google Drive logo.} A \emph{GD flip} consists of replacing a CW-GD with a CCW-GD. In particular, a GD flip is a cycle flip.
\end{definition}

\begin{figure}[h]
\centering
\includegraphics{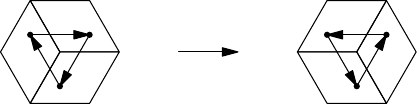}

\bigskip

\includegraphics{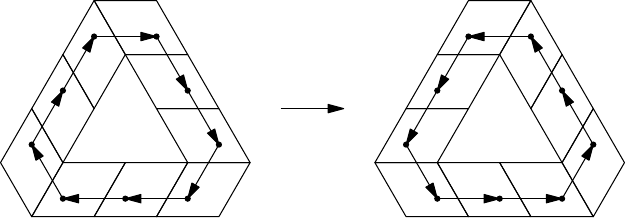}
\caption{GD flips of size 0 and 2.}
\end{figure}

It turns out that these types of flips are the only ones we need to connect all tilings with the same arrangement of holes.

\begin{theorem}\label{thm:gdflips}
   Any two tilings with the same arrangement of triangles are connected by a sequence of GD flips and their inverses.
\end{theorem}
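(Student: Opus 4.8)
The plan is to bootstrap from two facts already in hand: any two tilings with the same hole arrangement differ by a sequence of \emph{cycle} flips (the remark just before the statement, obtained from the proof of \cref{lemma:cycle}), and a tiling is unique for its hole arrangement iff its graph is acyclic. Since a GD flip is itself a cycle flip, it suffices to show that a single cycle flip is a composition of GD flips and their inverses. First I would reduce to the case where no two holes of $\Delta$ touch: if $s_1,s_2$ meet at a point, the region $(s_1\vee s_2)\setminus(s_1\cup s_2)$ has a unique tiling, so deleting it and regarding $s_1\vee s_2$ as one hole is a bijection on tilings that carries GD flips to GD flips (a GD flip can never alter a rigidly tiled region), and iterating makes the holes pairwise non-touching. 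Now suppose for contradiction that GD flips do not connect all tilings, so the tilings fall into at least two GD-classes; then some tiling $\mathcal{F}$ carries a simple oriented cycle $C$ in its graph with $\mathcal{F}$ and $\operatorname{flip}_C(\mathcal{F})$ in different classes — call such $(\mathcal{F},C)$ a \emph{bad pair} — and I fix a bad pair minimizing the number $N(C)$ of unit triangles in the closed region $\overline R$ swept by $C$ (the ribbon of its rhombi together with the region $R$ it encloses).

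The endgame is clean once one knows $\mathcal{F}|_R$ is acyclic. By the argument in the proof of \cref{thm:nonunique} — every rhombus of a tiling has an incoming edge from a rhombus unless it is flanked by two touching holes, which we have excluded — an acyclic tiling of $R$ can contain no rhombi at all; since $R$ is connected it is then either a single interior point (so $C$ is the triangle of three rhombi around that vertex, i.e.\ a GD flip of size $0$) or a single triangular hole of side $m$. In the latter case the ribbon of $C$ is a hole-free region hugging that hole, and minimality of $N(C)$ forces it to be the smallest such ribbon, which is exactly a CW-GD or CCW-GD frame of size $m$, so $\operatorname{flip}_C$ is a GD flip. Either way $(\mathcal{F},C)$ is not bad, a contradiction; hence there is a single GD-class.

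The step I expect to be the main obstacle is establishing that one may assume $\mathcal{F}|_R$ acyclic. The natural move — repeatedly flip the cycles strictly inside $R$, each of which has smaller $N$ and so gives a non-bad, hence GD-decomposable, flip — does not obviously terminate: when $R$ contains two or more (non-touching) holes it has \emph{no} acyclic tiling, and the rewriting $(\mathcal{F},C)\rightsquigarrow(\operatorname{flip}_{C''}(\mathcal{F}),C)$ preserves $N(C)$ without decreasing any other evident quantity. I expect the resolution to go through a flux / height-function argument: attach to each hole $h$ an integer period $\mu_h(\mathcal{F})$ of the (multivalued) height function of $\mathcal{F}$ around $h$; show a GD flip of size $m$ about a hole of side $m$ changes exactly one $\mu_h$ by $\pm1$ while a size-$0$ GD flip changes none; show that tilings with a fixed vector $(\mu_h)$ are connected by size-$0$ GD flips (the usual monotone min/max argument, now within one homology class); and show that the period vector of $\operatorname{flip}_C(\mathcal{F})$ differs from that of $\mathcal{F}$ only in the holes lying inside $R$ and by an amount achievable by GD flips about those individual holes. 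This bookkeeping — in particular pinning down the correct period attached to a hole of side $m$ and checking a size-$m$ GD flip moves it as claimed — is the technical heart; everything else (the merging reduction, the case analysis on an acyclic inner region, and the hexagon-flip normalization of a "thick" frame down to a GD frame) is comparatively routine.
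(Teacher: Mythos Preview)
Your self-diagnosed obstacle is the genuine gap, and the height-function patch you sketch is not a proof. For a holey triangle with several non-touching holes you would need a theory of multivalued heights with one integer period per hole, a verification that a size-$m$ GD flip shifts exactly one period by $\pm 1$, and a min/max connectivity result within each period class---none of which is standard in this setting, and the last item is essentially the theorem you are trying to prove. There is also a smaller leak in your ``clean endgame'': the step ``$\mathcal F|_R$ acyclic $\Rightarrow$ $R$ contains no rhombi'' invokes the fact that every rhombus has a rhombus in-neighbor, but a rhombus in $R$ may receive its rhombus in-edges from the ring $C$ rather than from $R$, so the restricted graph can have rhombus sources and the infinite-backward-path argument does not run inside $R$ without further input.

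The paper sidesteps all of this with a much shorter argument that does \emph{not} try to decompose a given cycle flip. First, \cref{lemma:cwgd} shows that any tiling with a clockwise cycle has a CW-GD: a clockwise cycle of minimal enclosed area admits no edge from its interior to itself (such an edge would create a smaller clockwise cycle), and hence its enclosed polygon has only $60^\circ$ interior angles, i.e.\ is a triangle---so the cycle is a CW-GD. Second, repeatedly performing GD flips terminates, because a GD flip preserves the number of vertical rhombi while strictly increasing the sum of the $x$-coordinates of their centers; by \cref{lemma:cwgd} the terminal tiling has no clockwise cycle. Finally, there is at most one tiling with no clockwise cycle, since two distinct tilings differ by cycle flips and any such cycle is clockwise in one of them. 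Hence every tiling is GD-connected to this unique canonical tiling, and the theorem follows. The monovariant replaces your entire termination/height-function program.
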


\begin{lemma}\label{lemma:cwgd}
    A tiling has a clockwise cycle if and only if it has a CW-GD.
\end{lemma}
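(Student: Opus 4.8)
The plan is to prove the two implications separately; the reverse one is a direct inspection while the forward one carries the content. For ``$\mathcal{F}$ has a CW-GD $\Rightarrow$ $\mathcal{F}$ has a clockwise cycle'': the $3(m+1)$ rhombi of a CW-GD of size $m$, listed in cyclic order around the ring, form a directed cycle of $G$ — consecutive rhombi are glued along an edge, the upward triangle of one meeting the downward triangle of the next — and one checks straight from the definition (vertical rhombi along the left side) that this directed cycle winds clockwise. This is precisely the convention the name CW-GD records, and it is visible in the figures.

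For the converse I would choose a clockwise cycle $C$ enclosing the least area among all clockwise cycles of $\mathcal{F}$, and show that $C$ is itself a CW-GD ring. The steps I anticipate: (1) $C$ is simple — a self-crossing clockwise closed walk splits at a repeated rhombus into two shorter closed walks, from which a winding computation extracts a clockwise cycle of strictly smaller enclosed area; (2) $C$ never turns counterclockwise, since at such a turn one could cut the corner and shorten $C$ while keeping it clockwise, so $C$ is ``locally convex''; (3) no edge of $G$ runs from a ring rhombus of $C$ into the region $D$ enclosed by $C$ — this follows from (2) by a short case check of how a locally convex clockwise cycle threads each of its rhombi (which sides it enters and exits through), using that a ring rhombus has exactly one outgoing edge besides the ring edge; (4) hence $D$ contains no rhombus, so the interior of $C$ is a disjoint union of triangular holes, and a further minimality argument — a clockwise cycle can be found around any single hole lying inside $D$ — shows the interior is empty or a single triangular hole $h$ of some size $m\ge 0$; (5) a simple locally convex clockwise cycle with empty interior is a $3$-cycle, a clockwise pinwheel about a vertex, which is a CW-GD of size $0$, while one whose interior is a single size-$m$ hole has exactly $3(m+1)$ rhombi and, by convexity of its outer boundary, is forced to be the CW-GD configuration of size $m$. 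In either case $\mathcal{F}$ contains a CW-GD.

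The main obstacle is step (4), specifically the passage from (3) to ``$D$ is rhombus-free'': if $\rho\subset D$ were a rhombus, tracing incoming edges of $G$ backward from $\rho$ stays inside $D$ by (3) and ends at a hole or at a cycle strictly inside $D$; minimality rules out that cycle being clockwise, and the delicate part is to rule out its being counterclockwise as well — i.e.\ to show a minimal clockwise cycle encloses no cycle at all. I expect this to need the book-keeping of (3) applied repeatedly, descending through any nested cycles until reaching an innermost one which is then forced into a GD pattern of a definite orientation, and arguing that this cannot be counterclockwise while sitting inside the clockwise cycle $C$. The remaining pieces — simplicity, local convexity, the single-hole reduction, and identifying the ring once its shape is pinned down — should be comparatively routine once this is in place.
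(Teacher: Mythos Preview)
Your step (4) is a genuine gap, and larger than you acknowledge. Even granting (3), the backward trace from a rhombus $\rho\subset D$ may simply terminate at a hole --- holes have indegree zero, so a rhombus both of whose in-neighbours are holes admits no further backward step --- and you offer no contradiction in that case. So ``$D$ is rhombus-free'' is unestablished even before you reach the counterclockwise-cycle obstacle you flag, and the vague ``descend through nested cycles'' sketch for that obstacle does not obviously converge either.

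The paper sidesteps (4) entirely by two changes of viewpoint. First, it proves the edge claim in the \emph{opposite} direction to your (3): no edge runs from a rhombus strictly inside $C$ \emph{to} a rhombus of $C$. The argument is a ``go straight'' trick --- from such an edge one follows rhombi in a single fixed grid direction through the interior until the path meets $C$ a second time; this chord cuts off a strictly smaller clockwise cycle, contradicting minimality. No simplicity or local-convexity bookkeeping is needed.

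Second --- and this is the idea you are missing --- the paper never attempts to analyse what lies inside $C$. It looks instead at the polygon $P$ enclosed by $C$ and observes that, traversed clockwise, every unit edge of $P$ must point in one of the three directions of a clockwise traversal of an upward unit triangle (this is forced by the orientation of $C$ and the definition of edges in $G$). Hence every interior angle of $P$ is $60^\circ$ or $300^\circ$; a short local argument at a putative reflex vertex rules out $300^\circ$; so $P$ is an upward triangle and $C$ is forced to be the CW-GD ring around it. That the interior is a single triangular hole is then a \emph{consequence} of $P$ being a triangle together with saturation, not something one must prove first. Your steps (1), (2), (5) become unnecessary once this boundary-polygon observation is in hand.
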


\begin{proof}
    Take a minimal clockwise cycle $C$ (in terms of enclosed area). 
    
    First, we claim that there are no edges going from a rhombus strictly inside $C$ to a rhombus of $C$. Suppose there is such an edge $a \rightarrow b$. Without loss of generality, we may assume that the boundary between $a$ and $b$ is horizontal, that is, $a$ is directly below $b$. Then we can construct a sequence of rhombi starting with $b,$ $a$ where each next rhombus is directly below the previous one. Eventually, this sequence must hit $C$ again. Thus, we've constructed a path surrounded by $C$ connecting two vertices of $C$. This path creates a clockwise cycle smaller than $C$, which is a contradiction.

    Now consider the polygon surrounded by $C$. Note that if we traverse it clockwise, its edges can only go in three directions, corresponding to traversing an upward triangle in clockwise direction. This means that all internal angles of this polygon are equal to $60^\circ$ or $300^\circ$. If some is equal to $300^\circ$, then only one of the triangles surrounding can belong to $C$, which is impossible. Since all angles of the polygon are equal to $60^\circ$, it has to be a triangle, so $C$ is a CW-GD, as desired.
\end{proof}

\begin{figure}
\centering
\includegraphics{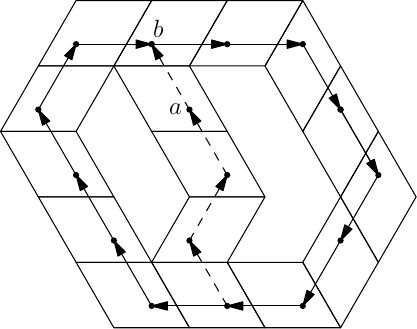}
\caption{A clockwise cycle that contains a smaller clockwise cycle.}
\end{figure}

\begin{proof}[Proof of \cref{thm:gdflips}]
Suppose that we begin with some tiling and start performing GD flips while possible. Since the number of vertical rhombi does not change after a GD flip, and the sum of $x$-coordinates of their centers strictly increases with every GD flip, we must eventually stop after finitely many flips. Note that there is at most one tiling with no clockwise cycles: if there were two, they would differ by several cycle flips, so one of them would have a clockwise cycle. This shows that starting from any tiling, we can get to the unique tiling with no clockwise cycles by performing GD flips, and then back to any other tiling by performing their inverses.
\end{proof}

We can construct the unique tiling with no clockwise cycles more explicitly:

\begin{proposition}
    If in the process of sliding triangles (see \cite{LiuYao22}, Section 3) we slide the triangle left whenever possible, we end up with a tiling that has no CW-GD.
\end{proposition}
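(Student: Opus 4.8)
The plan is to rephrase the conclusion via clockwise cycles and then derive a contradiction from the slide-left rule. By \cref{lemma:cwgd}, a tiling has no CW-GD if and only if it has no clockwise cycle, and by the argument in the proof of \cref{thm:gdflips} there is a unique such tiling of $\Delta\setminus\bigcup_{s\in S}s$; so the proposition asserts that the slide-left construction produces this particular tiling. Let $\tau$ be the tiling it produces and suppose, for contradiction, that $\tau$ has a clockwise cycle.

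I would first pin down the structure of the offending cycle. Take a clockwise cycle of $\tau$ enclosing the least area; by the proof of \cref{lemma:cwgd} it is the ring $C$ of a CW-GD around an upward triangle $R$ of side $k\ge 3$. From this one reads off several facts for free: each of the three corner unit triangles of $R$ is a hole of $\tau$ (a corner cell cannot lie in a rhombus, since the only cell of $R$ adjacent to it across a non-boundary edge already belongs to $C$); no rhombus crosses $\partial R$ (every cell of $R$ meeting $\partial R$ is a corner or a cell of $C$), so the interior of $R$ is tiled independently; and since a side-$(k-3)$ upward region needs exactly $k-3$ holes to be tileable, the central sub-triangle $M$ of $R$ carries exactly $k-3$ holes, whence $R$ is saturated. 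Finally, by definition of CW-GD the vertical rhombi of $C$ lie along the left edge of $R$, while the rhombi along the bottom edge and the right edge of $R$ have the two other orientations.

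The contradiction should come from the sequence of unit slides that produced $\tau$. A unit slide replaces one rhombus by another, and the tilt of the new rhombus records whether the slide went left or right; moreover the two possible tilts are exactly the ones a CW-GD carries along its right edge and its bottom edge respectively. Now the right-edge rhombi of $C$ could have been committed by (the preferred) leftward slides, so they raise no issue; I would instead look at the bottom-edge rhombi of $C$, pick the last slide in the history that committed one of them --- say it slides the triangle $t$, moving a hole toward the corner of $R$ at the end of that bottom arm --- and argue that at that moment the leftward slide of $t$ was also admissible, i.e.\ it still left the arrangement spread-out. The key is that $R$ is already saturated with exactly the hole pattern of a GD (three corners together with $M$), so the sub-triangles that could obstruct pushing $t$ left are exactly full, not over-full, and pushing $t$ left keeps all spread-out inequalities valid. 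But the rule always takes the leftward slide when it is admissible, so it would have done so, committing the other-tilt rhombus there and destroying the CW-GD $C$ --- contradicting $C\subseteq\tau$.

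The main obstacle is this last step, carried out inside the sliding formalism of \cite{LiuYao22}: one has to make precise which rhombus a unit slide commits (in order to identify the offending $t$ from the shape of $C$ and the moment of the slide), and then verify the spread-out condition for the leftward alternative. I expect the admissibility check to be the delicate point --- it should follow from the saturation of $R$ and the rigidity of the GD ring, but it needs a short case analysis along the left edge of $R$, the topmost vertical rhombus of $C$ (adjacent to the apex corner of $R$) behaving slightly differently from the others. Everything else is bookkeeping: the structural facts in the second paragraph are immediate, and once the contradiction is established the explicit description claimed in the proposition --- that the slide-left tiling is the unique tiling with no clockwise cycle, equivalently the terminal tiling of the GD-flip order from \cref{thm:gdflips} --- is just a restatement.
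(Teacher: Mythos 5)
Your overall strategy---trace the offending CW-GD back to a slide that went right at a moment when the leftward slide was admissible---is the same idea as the paper's (one-sentence) proof, which simply notes that the topmost rhombus of any CW-GD would have had to arise from a rightward slide made while the leftward slide was available. The problem is in your second paragraph: the structural facts you claim to read off ``for free'' from a minimal clockwise cycle are false. Containing the CW-GD ring $C$ inside an upward triangle $R$ does not force the three corner cells of $R$ to be holes, does not prevent rhombi from crossing $\partial R$, and does not make $R$ saturated. Your parenthetical justification (``the only cell of $R$ adjacent to it across a non-boundary edge already belongs to $C$'') ignores the possibility that a corner cell of $R$ is paired with a down-cell \emph{outside} $R$; excluding that would require $R$ to be saturated (\cref{lemma:saturated}), which is precisely what you then deduce from it, so the reasoning is circular. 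Concretely, in the triangle of side $4$ with holes at the apex, the two bottom corners, and the rightmost cell of the third row, there is a tiling containing a CW-GD of size $0$ whose enclosing side-$3$ triangle $R$ sits at the bottom left: the apex and bottom-right corner cells of $R$ are covered by rhombi reaching outside $R$, and $R$ contains a single hole, so it is far from saturated. (Only the middle triangle $M$ is genuinely sealed off by the ring, so your count of $k-3$ holes in $M$ is correct, but by itself it gives nothing.)

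This matters because your decisive step---that when the offending bottom-edge rhombus of $C$ was committed, the leftward slide was still admissible---is justified only by ``$R$ is already saturated with exactly the hole pattern of a GD,'' which you are not entitled to; and on top of that you explicitly leave the admissibility check as an expected-but-unverified case analysis, so the contradiction is never actually derived. The paper's argument needs none of this: it requires no saturation and no minimality bookkeeping, only the observation that the \emph{topmost} rhombus of a CW-GD records a rightward slide performed while the corresponding leftward slide (into the cells occupied, in the final tiling, by the GD's vertical rhombi along its left side) was possible, contradicting the greedy rule. If you want to complete your route, the work has to go into the sliding formalism of \cite{LiuYao22}---pinning down which slide creates which rhombus of $C$ and verifying admissibility of the leftward alternative there---not into structural claims about $R$ that do not hold.
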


\begin{proof}
    Suppose there is a CW-GD. Then its topmost rhombus arose from us sliding a triangle to the right when we could slide it to the left.
\end{proof}

\section{Segments Forced by Holes}

Given a fixed arrangement of triangular holes, it is natural to ask if there is a tiling of the remaining region including a certain rhombus. This is equivalent to determining if a certain segment of the triangular grid is forced to be a part of the tiling. Note that \cref{lemma:saturated} shows that all segments belonging to a boundary of a saturated triangle are forced, and \cref{lemma:cycle} shows that a segment between two rhombi is forced if and only if they belong to different connected components. 

Given a fined mixed subdivision with corresponding graph $G$, consider the subgraph of $G$ induced by vertices corresponding to rhombi. In this graph, pick a strongly connected component that has no edges going in from other strongly connected components. Let $A$ be the set of rhombi in this component, as well as all saturated triangles adjacent to them. Note that in $G$, there are no edges going from a vertex outside $A$ into a vertex in $A$.

Consider the union of regions corresponding to vertices of $A$, which clearly is connected. Consider the polygon $P$ that is the outside boundary of this region. This polygon, when traversed clockwise, can only have edges going in the directions corresponding to traversing an upward triangle in clockwise direction. It is also not difficult to see that all outward-pointing corners of $P$ must be part of saturated triangles, or else the rhombi occupying those corners cannot be part of the initial strongly connected component.

\begin{figure}[h]
\centering
\includegraphics{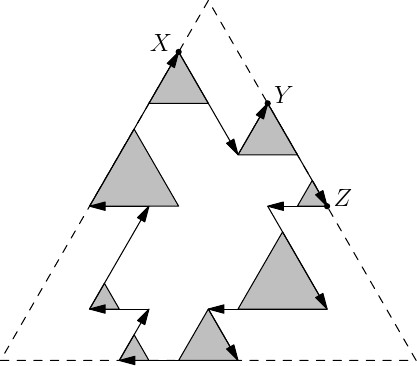}
\caption{A polygon $P$ traversed clockwise and the corresponding triangle $T$ (dashed).}\label{fig:forced}
\end{figure}

Let $T$ be the smallest upward triangle containing $P$. Note that if $X$ and $Y$ are the topmost vertices of $P$ on left and right sides of $T$, respectively, then the distance from $X$ to $Y$ along the boundary of $P$ is not less than the distance from $X$ to $Y$ along the boundary of $T$ (in both cases, we are traversing polygons clockwise). Indeed, every unit segment on the path from $X$ to $Y$ along the boundary of $P$ increases the $x$-coordinate by at most $\frac 12$, and each unit segment on the path from $X$ to $Y$ along the boundary of $T$ increases the $x$-coordinate by exactly $\frac 12$. Similarly, if $Z$ is the bottommost vertex of $P$ on the right side of $T$, the distance from $Y$ to $Z$ along the boundary of $P$ is not less that the distance from $Y$ to $Z$ along the boundary of $T$. This shows that $\text{perimeter}(P) \ge \text{perimeter}(T)$.

However, note that every unit segment on the boundary of $P$ corresponds to an edge going from a vertex in $A$ to a vertex not in $A$, and there are no edges going from a vertex not in $A$ into a vertex in $A$, so the total outdegree of vertices of $A$ larger than the total indegree of vertices in $A$ by at least $\text{perimeter}(P)$. This means that the sum of sizes of triangular holes in $A$ is at least $\text{perimeter}(P)/3 \ge \text{size}(T)$. Since all these triangular holes lie entirely inside $T$, all inequalities must turn into equalities, and $T$ is saturated.

In particular, all segments on the boundary of $P$ between $X$ and $Y$ must go down-right or up-right, and all segments on the boundary of $P$ between $Y$ and $Z$ must go down-right. This implies that $T \setminus P$ consists of three regions containing three vertices of $T$, and there is only one way to tile these regions.

This also implies that $A$ is simply connected -- if $A$ had ``holes'', then there would be edges going from vertices of $A$ to vertices in the holes, so the difference of total out- and in-degrees of vertices of $A$ would be strictly greater than $\text{perimeter}(P)$.

Now, we may replace $T$ with a triangular hole and repeat the process. This results in the following categorization of the forced segments:

\begin{theorem}
     Given a set of triangular holes, the boundaries of saturated triangles are forced, and remaining forced segments come from repeated placing rhombi in the three corners of saturated triangles until obstructed by holes.
\end{theorem}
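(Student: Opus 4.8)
The plan is to establish the two halves of the statement separately. That boundaries of saturated triangles are forced is immediate from \cref{lemma:saturated}: no rhombus of any tiling can cross such a boundary, so every unit segment on it appears in every tiling. The substance is the second half, which I would prove by induction on the number of rhombi, fixing one tiling $\mathcal F$ with corresponding graph $G$ at each step; the base case is when $\mathcal F$ has no rhombi, in which case every segment separates two holes (or a hole from the exterior) and hence lies on the boundary of a hole, which is a saturated triangle.

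For the inductive step I would follow the discussion preceding the statement. Pass to the subgraph $G'$ of $G$ induced by the rhombi, pick a strongly connected component $C$ that is a source in the condensation of $G'$, and let $A$ be $C$ together with all triangular holes adjacent to rhombi of $C$. The key combinatorial observation is that no edge of $G$ enters $A$ from outside, since holes have indegree $0$ and $C$, being a source, receives no edges from other rhombus-components. Consequently every unit segment of the outer boundary $P$ of $\bigcup A$ corresponds to an edge leaving $A$, which forces all such segments to point in the three directions of a clockwise traversal of an upward unit triangle, so every interior angle of $P$ is $60^\circ$ or $300^\circ$. Let $T$ be the smallest upward triangle containing $P$. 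Comparing how fast the $x$-coordinate can increase along $\partial P$ versus along $\partial T$ between the extreme vertices of $P$ yields $\text{perimeter}(P)\ge \text{perimeter}(T)$, while on the other hand $\text{perimeter}(P)$ equals the total outdegree minus the total indegree of the vertices of $A$, which is $3\sum_{h\in A}\text{size}(h)\le 3\,\text{size}(T)=\text{perimeter}(T)$ by the spread-out hypothesis and the fact that the holes of $A$ all lie inside $T$. Equality throughout shows that $T$ is saturated, that $\bigcup A$ is simply connected, and that $T$ minus the region enclosed by $P$ consists of exactly three corner regions, each of which has a unique tiling — the one obtained by placing forced rhombi into a corner of $T$ until a hole is reached.

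It then remains to classify every segment and recurse. Segments on $\partial T$ or on a boundary of a hole in $A$ are boundaries of saturated triangles; segments inside the three corner regions are exactly the corner-filling segments of $T$; and a segment between two rhombi of $C$ is not forced, since $C$ is strongly connected and \cref{lemma:cycle} lets us flip it. Finally, because no rhombus crosses $\partial T$, the tilings of $\Delta$ with holes $S$ are in bijection with pairs of a tiling of the interior of $T$ with the holes of $A$ and a tiling of $\Delta$ with holes $(S\setminus A)\cup\{T\}$, and a segment is forced for $S$ if and only if it is forced in whichever of the two subproblems contains it; both subproblems have strictly fewer rhombi than $\mathcal F$ except in terminal cases ($T=\Delta$, or no rhombi outside $T$) that are checked directly, so induction closes the argument. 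I expect the main obstacle to be this last bookkeeping — ensuring the recursion is well-founded and that forcedness is inherited both ways — along with verifying that the unique tiling of a corner region is literally the greedy corner-rhombus placement named in the statement; the perimeter estimate and its equality case, though the technical core, are already carried out in the text above.
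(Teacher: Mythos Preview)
Your approach mirrors the paper's: isolate a source strongly connected component of rhombi, adjoin the adjacent holes to form $A$, sandwich $\text{perimeter}(P)$ between $\text{perimeter}(T)$ and $3\sum_{h\in A}\text{size}(h)$ to force $T$ saturated with uniquely tileable corner regions, then replace $T$ by a hole and recurse. The only slip is that you assert $\text{perimeter}(P)$ \emph{equals} the net outdegree of $A$ before establishing simple connectedness; a priori one only has $\text{perimeter}(P)\le \text{outdeg}(A)-\text{indeg}(A)$, since edges could also leave $A$ into interior cavities of $\bigcup A$, and it is precisely equality in this step---forced by the sandwich with $\text{perimeter}(T)$---that yields simple connectedness, so changing ``equals'' to ``is at most'' restores the correct order of the argument.
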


For example, in \cref{fig:forced}, one can place rhombi in the regions $T\setminus P$, and the obstruction come from the fact that the outward-pointing corners of $P$ are all holes. This also gives a poly-time algorithm for determining the forced segments, even though the number of possible tilings may be exponential.

\section{Trapezoid Flips}

\begin{definition}
    A \emph{trapezoid flip} consists of replacing a triangle and an adjacent rhombus with a different triangle and an adjacent rhombus occupying the same trapezoid.
\end{definition}

\begin{figure}[h]
\centering
\includegraphics{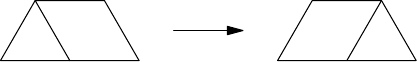}
\caption{A trapezoid flip.}
\end{figure}

It is known that \emph{all} fine mixed subdivisions of a dilated triangle can be connected by such flips.

\begin{proposition}[\cite{San03}, Corollary 4.5] 
    Any two fine mixed subdivisions are connected by at most $\frac {2n(n-1)}{3}$ trapezoid flips.
\end{proposition}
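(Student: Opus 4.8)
The plan is to connect any two fine mixed subdivisions through one of three ``extreme'' subdivisions, bounding each half of the trip by a linear functional on the positions of the holes. First I would coordinatize each upward unit triangle of $\Delta$ by its triple $(x,y,z)$ of lattice distances to the three sides of $\Delta$, so $x,y,z\ge 0$ and $x+y+z=n-1$ (downward unit triangles get labels summing to $n-2$). For a fine mixed subdivision $\mathcal{F}$, let $H_x(\mathcal{F}),H_y(\mathcal{F}),H_z(\mathcal{F})$ be the sums of these coordinates over the $n$ triangular holes of $\mathcal{F}$. Since each hole contributes $n-1$, we get $H_x(\mathcal{F})+H_y(\mathcal{F})+H_z(\mathcal{F})=n(n-1)$ for \emph{every} $\mathcal{F}$. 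Let $\mathcal{C}_z$ be the fine mixed subdivision whose $n$ holes are exactly the $n$ upward unit triangles on the bottom side of $\Delta$ (those with $z=0$); this is the unique subdivision with $H_z=0$, and the uniqueness of its tiling follows from the corollary to \cref{thm:nonunique} by repeatedly merging consecutive bottom triangles up to $\Delta$. Define $\mathcal{C}_x$ and $\mathcal{C}_y$ symmetrically.

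The crux is the following lemma: for every $\mathcal{F}$ one can reach $\mathcal{C}_z$ using at most $H_z(\mathcal{F})$ trapezoid flips (and symmetrically for $x$ and $y$). It suffices to show that whenever some hole of $\mathcal{F}$ has $z$-coordinate at least $1$, there is a trapezoid flip decreasing $H_z$ by exactly $1$: a flip yields another fine mixed subdivision, whose holes are again spread-out by \cref{thm:spread}, so we may iterate, dropping $H_z$ by one each time until it hits $0$, at which point all holes have $z=0$ and $\mathcal{F}=\mathcal{C}_z$. So suppose a hole sits at $h=(x,y,z)$ with $z\ge 1$, and let $D$ be the downward unit triangle ``just below'' it, labelled $(x,y,z-1)$; it lies in $\Delta$. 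The three upward unit triangles adjacent to $D$ are $h$ together with $u_1=(x+1,y,z-1)$ and $u_2=(x,y+1,z-1)$, and these three are exactly the corners of an upward triangle of side $2$. By the spread-out condition that triangle contains at most $2$ holes, so at least one of $u_1,u_2$ is not a hole; hence in $\mathcal{F}$ the triangle $D$ forms a rhombus with some $N\in\{u_1,u_2\}$. Then $h$ together with the rhombus $D\cup N$ forms a trapezoid, and flipping it moves the hole from $h$ to $N$, whose $z$-coordinate is $z-1$, leaving all other holes fixed; thus $H_z$ drops by exactly $1$, as needed.

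To finish, I would combine the lemma with reversibility of trapezoid flips and the triangle inequality: for any $\mathcal{F}_1,\mathcal{F}_2$ and any $w\in\{x,y,z\}$ we can travel $\mathcal{F}_1\to\mathcal{C}_w\to\mathcal{F}_2$ in at most $H_w(\mathcal{F}_1)+H_w(\mathcal{F}_2)$ trapezoid flips, so the flip distance $d(\mathcal{F}_1,\mathcal{F}_2)$ satisfies
\[
d(\mathcal{F}_1,\mathcal{F}_2)\le\min_{w\in\{x,y,z\}}\bigl(H_w(\mathcal{F}_1)+H_w(\mathcal{F}_2)\bigr)\le\tfrac13\sum_{w\in\{x,y,z\}}\bigl(H_w(\mathcal{F}_1)+H_w(\mathcal{F}_2)\bigr)=\tfrac{2n(n-1)}{3}.
\]
The main obstacle is the lemma, and inside it the claim that one can \emph{always} make monotone progress toward $\mathcal{C}_z$; the content is precisely that the only possible obstruction to sliding a hole ``downward'' across the triangle directly beneath it would be a triple of holes forming a side-$2$ triangle, which the spread-out condition rules out. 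Everything else --- the identity $\sum_w H_w=n(n-1)$, the well-definedness of the $\mathcal{C}_w$, and the averaging step --- is routine.
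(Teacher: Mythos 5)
Your proposal is correct and follows essentially the same route as the paper: connect each subdivision to the canonical one with all holes on one side via flips that monotonically decrease the corresponding coordinate sum (your $H_w$ is the paper's $\sum_s h_i(s)$), then average over the three directions using $h_1+h_2+h_3=n-1$ per hole. The only difference is that you spell out why the coordinate-decreasing flip always exists, a step the paper merely asserts.
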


\begin{proof}
    For any triangle hole $s$, let $h_1(s),$ $h_2(s)$, and $h_3(s)$ denote the distances from sides $s$ to the parallel sides of the big triangle. Note that for any triangle $s$ with $h_i(s) > 0$, there is a trapezoid flip that decreases $h_i(s)$ by $1$. Then the distance from a fine mixed subdivision with triangle set $S$ to the unique fine mixed subdivision with all triangles at the bottom is at most $\sum_{s \in S} h_i(s).$ Then the distance between two fine mixed subdivisions with triangle sets $S_1$ and $S_2$ is at most $\sum_{s \in S_1} h_i(s) + \sum_{s \in S_2} h_i(s).$ Since for any $s$ we have $h_1(s) + h_2(s) + h_3(s) = n - 1,$ the sum of this expression over $i = 1, 2, 3$ is equal to $2n(n-1)$, so for some choice of $i$ its at most $\frac{2n(n-1)}{3}$, as desired.
    
\end{proof}

We show that this bound is tight up to terms of order $O(n)$. 

\begin{proposition}
    Place a triangle of side $n$ over a grid of unit hexagons. Consider two fine mixed subdivisions, where one is obtained by tiling each hexagon in clockwise manner and the other obtained by tiling each hexagon in counterclockwise manner (trimming off half of each rhombus that intersects the boundary of the triangle). At least $\frac{2n^2}{3} - n - O(1)$ trapezoid flips are needed to change from one tiling to the other.
\end{proposition}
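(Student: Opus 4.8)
The plan is to prove the lower bound by an amortized count of flips against the downward unit triangles, rather than via a single monovariant. Write $\mathcal C$ and $\mathcal C'$ for the clockwise and counterclockwise tilings. In any tiling, every downward unit triangle $\delta$ lies in exactly one rhombus --- $\delta$ together with one of its three upward neighbours --- so a tiling is determined by the map $\rho$ sending $\delta$ to that neighbour, and a trapezoid flip changes $\rho$ on exactly one $\delta$: it is performed on a trapezoid consisting of $\delta$, the neighbour $\rho(\delta)$, and a second upward neighbour $u$ of $\delta$ that is currently a triangular hole, and it replaces $\{\delta,\rho(\delta)\}$ by $\{\delta,u\}$ while moving that hole onto the old position of $\rho(\delta)$. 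So for a flip sequence of length $k$ I set $m_\delta$ to be the number of flips performed on $\delta$; then $\sum_\delta m_\delta=k$, and the description above pins down which flip can realize a prescribed change of $\rho(\delta)$ and when it may occur.

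The crux is the following claim: for a hexagon $H$ of the grid all six of whose triangles lie inside the big triangle (so $H$ carries no hole in $\mathcal C$ or in $\mathcal C'$), any flip sequence from $\mathcal C$ to $\mathcal C'$ satisfies $F(H):=\sum_{\delta\in H}m_\delta\ge 4$. Label the three downward triangles of $H$ as $D_1,D_2,D_3$ in cyclic order and the interleaved upward ones as $U_1,U_2,U_3$, so that $D_i$ is flanked inside $H$ by $U_i$ and $U_{i+1}$, and (after relabelling) $\rho(D_i)=U_i$ in $\mathcal C$ while $\rho(D_i)=U_{i+1}$ in $\mathcal C'$. The two hexagon matchings differ on all three downward triangles, so $m_{D_i}\ge 1$ and $F(H)\ge 3$; suppose for contradiction that $F(H)=3$, i.e.\ each $D_i$ is flipped exactly once, by a flip $\phi_i$. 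Because $\phi_i$ is the only flip on $D_i$, immediately before $\phi_i$ we have $\rho(D_i)=U_i$ and immediately after $\rho(D_i)=U_{i+1}$, which forces $\phi_i$ to be performed on the trapezoid $\{U_i,D_i,U_{i+1}\}$ with $U_{i+1}$ a hole just before $\phi_i$. But $D_{i+1}$ covers $U_{i+1}$ (its rhombus is $\{D_{i+1},U_{i+1}\}$) from the start up to its unique flip $\phi_{i+1}$, so $U_{i+1}$ cannot be a hole before $\phi_{i+1}$; hence $\phi_{i+1}$ precedes $\phi_i$ for every $i$ taken cyclically, an impossible cyclic order. Therefore $F(H)\ge 4$.

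Summing the claim over all interior hexagons, which are pairwise disjoint, gives $k=\sum_\delta m_\delta\ge\sum_H F(H)\ge 4\cdot\#\{\text{interior hexagons}\}$. Since only $O(n)$ of the $n^2$ unit triangles fail to lie in an interior hexagon, there are $\tfrac{n^2}{6}-O(n)$ of them, which already yields $k\ge\tfrac{2n^2}{3}-O(n)$; recovering the precise bound $\tfrac{2n^2}{3}-n-O(1)$ then amounts to counting exactly the hexagons that meet the boundary of the triangle for the chosen placement. The step I expect to demand the most care is making airtight the claims that a flip changes $\rho$ on exactly one downward triangle and that a prescribed change of $\rho(D_i)$ can only be effected by the flip described above --- this is what legitimizes the ``$U_{i+1}$ is a hole just before $\phi_i$'' deduction, and it is where one must go through the geometry of the three trapezoids surrounding a fixed downward triangle --- while a secondary chore is bookkeeping the boundary hexagons tightly enough to get the constant $n$ rather than merely $O(n)$.
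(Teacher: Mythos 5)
Your per-hexagon lemma is correct, and it is a genuinely different mechanism from the paper's: the paper double-counts flips globally (for each rhombus of the target tiling, the \emph{last} flip involving it, giving $\tfrac{n^2-n}{2}$ flips; plus, for each hexagon inside the triangle, the \emph{first} flip touching it, which produces a rhombus contained in no hexagon and hence coincides with none of the former), whereas you localize everything at the downward unit triangles and show by the cyclic-order contradiction that each interior hexagon must absorb at least $4$ flips. That local argument is sound: a trapezoid flip does change the partner map $\rho$ at exactly one downward triangle, and the deduction that $U_{i+1}$ must be a hole just before $\phi_i$ while it stays covered by the rhombus $\{D_{i+1},U_{i+1}\}$ until $\phi_{i+1}$ is valid, so $F(H)\ge 4$ holds.

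The gap is in the final accounting, and the fix you point to is the wrong one. Summing $F(H)\ge 4$ over interior hexagons gives at best $4\bigl(\tfrac{n^2}{6}-\tfrac{n}{2}-O(1)\bigr)=\tfrac{2n^2}{3}-2n-O(1)$, so no amount of care in counting which hexagons meet the boundary recovers the stated $\tfrac{2n^2}{3}-n-O(1)$; you are short by $n$. What closes it, inside your own framework, is that the two tilings share no rhombi at all (every rhombus of either tiling lies in a single hexagon, and within each hexagon the clockwise and counterclockwise matchings disagree on every downward triangle), so $m_\delta\ge 1$ for \emph{every} one of the $\tfrac{n^2-n}{2}$ downward triangles, including the roughly $n$ of them lying in boundary hexagons. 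Charging one flip to every downward triangle and one extra flip to each interior hexagon (your $F(H)\ge 4$) gives
\[
k=\sum_\delta m_\delta \;\ge\; \frac{n^2-n}{2}+\#\{\text{interior hexagons}\}\;\ge\; \frac{n^2-n}{2}+\frac{n^2}{6}-\frac{n}{2}-O(1)\;=\;\frac{2n^2}{3}-n-O(1).
\]
With this correction your ledger has the same shape as the paper's ($\tfrac{n^2-n}{2}$ plus one per hexagon); the genuinely new content of your proof is the justification of the ``plus one per hexagon'' term, which replaces the paper's observation that the first flip inside a hexagon creates a rhombus outside the hexagon grid by a purely order-theoretic argument.
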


\begin{figure}[h]
\centering
\includegraphics{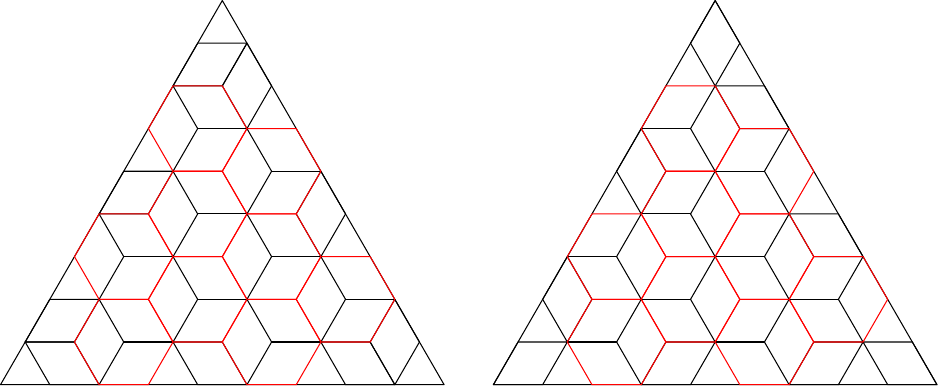}
\caption{Two tilings that require many trapezoid flips to connect, with common hexagon boundaries highlighted in red.}
\end{figure}

\begin{proof}
    For each rhombus in the final tiling, consider the last trapezoid flip involving this rhombus. Since these flips are all distinct and the two tilings have no rhombi in common, this gives us $\frac{n^2-n}{2}$ distinct trapezoid flips.

    For each hexagon, consider the first trapezoid flip involving some rhombus inside this hexagon in the initial tiling. These flips are clearly pairwise distinct. Additionally, they result in a rhombus that does not belong to any hexagon, so these moves are distinct from the previously considered $\frac{n^2-n}{2}$. Note that there are $\frac{n^2}{6} - \frac{n}{2} - O(1)$ hexagons lying inside the triangle. Therefore, we have $\frac{n^2-n}{2} + \frac{n^2}{6} - \frac{n}{2} - O(1) = \frac{2n^2}{3} - n - O(1)$ distinct trapezoid flips, as desired.
\end{proof}

\section{Further Questions}

The rhombus tilings of a hexagon with sides $a, b, c, a, b, c$ are in bijection with $3$-dimensional Young tableaux that fit in a $a \times b \times c$ box. A GD flip of size $0$ in this case is also known as a \emph{hexagon flip}, and corresponds to removing a cube.

\begin{figure}[h]
\centering
\includegraphics{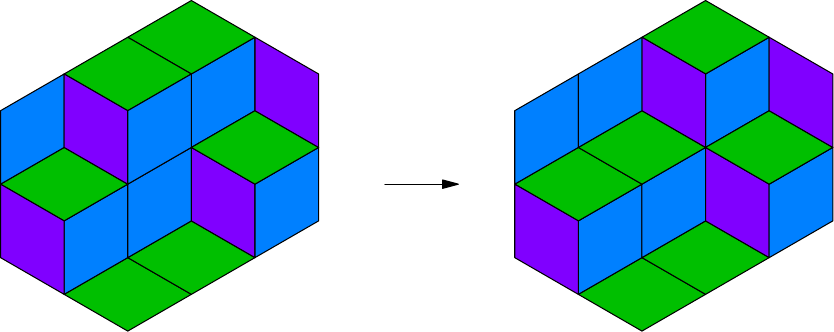}
\caption{A GD flip of size $0$ (note that this is rotated by $60^\circ$ counterclockwise relative to conventions in the rest of the paper for ease of visualization).}
\end{figure}

Tilings of regions that may contain triangular holes of some fixed orientation correspond to similar structures in certain $3$-dimensional manifolds. It might be interesting to find an analog of Young tableaux that can help with understanding fine mixed subdivisions of triangles.

\begin{figure}[h]
\centering
\includegraphics{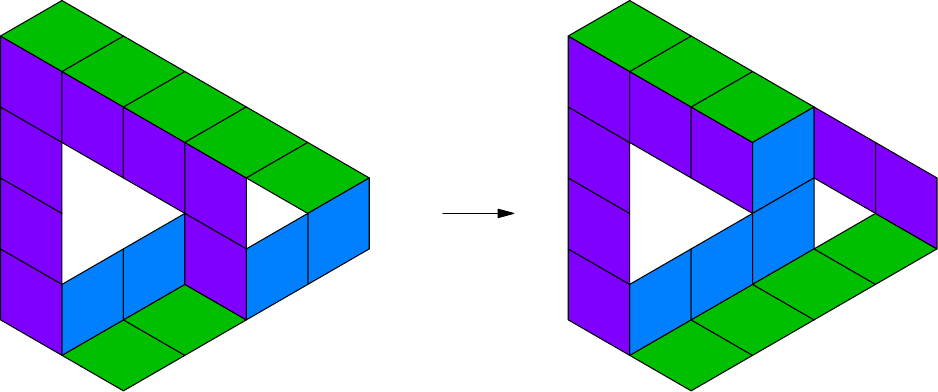}
\caption{A GD flip of nonzero size appears to correspond to removing some non-Euclidean polyhedron.}
\end{figure}

One attempt of studying this correspondence is by looking at what we called the \emph{depth function}. Fix a direction parallel to one of the lines of the triangle grid. For any node $p$ of the triangular grid, consider the ray starting at $p$ and going in the chosen direction, and let $d(p)$ denote the number of rhombi whose short diagonals belong to that ray. Note that the values of $d$ at neighboring nodes differ by at most one, and that a GD flip changes the values of $d$ of nodes in the triangle surrounded by the flip by one, and does not change all other values of $d$.

Note that the unique tiling with no clockwise cycles corresponds to a structure where no polyhedron can be removed, i.e., the ``empty'' one. Similarly, the unique tiling with no counterclockwise cycles corresponds to a structure where no polyhedron can be added, i.e., the ``full'' one. For each node, the difference of values of $d$ for full and empty configurations corresponds to how ``thick'' the region that we're filling with polyhedra is at that point.

\end{document}